\newtheorem{theorem}{Theorem}
\theoremstyle{definition}
\newtheorem*{remark}{Remark}
\newtheorem*{acknowledgement}{Acknowledgement}
\title{Nonpositively curved manifolds containing a prescribed nonpositively curved hypersurface}
\author{T. T$\hat{\mathrm{a}}$m Nguy$\tilde{\hat{\mathrm{e}}}$n Phan}
\address{Department of Mathematics\\
5734 S. University Ave.\\
Chicago, IL 60637}
\email{ttamnp@math.uchicago.edu}
\def\R{\mathbb{R}}
\def\M{\widehat{M}}
\begin{document}
\begin{abstract}
We use pinched smooth hyperbolization to show that every closed, nonpositively curved $n$-dimensional manifold $M$ can be embedded as a totally geodesic submanifold of a closed, nonpositively curved $(n+1)$-dimensional manifold $\widehat{M}$ of geometric rank one.
\end{abstract}
\maketitle
Ralf Spatzier asked the author the following interesting question: for a closed manifold $M$ with sectional curvature $\leq 0$ (e.g. a closed, nonpositively curved, locally symmetric manifold), is there a closed manifold $\M$ of one dimension higher with sectional curvature $\leq 0$ and has geometric rank $1$ (and thus is not a product) that contains $M$ as a totally geodesic submanifold? The answer to this question is yes thanks to recent technology of pinched smooth hyperbolization (\cite{Ontanedahyperbolization}). In this paper we give a construction of such a manifold $\M$.
\begin{theorem}
Let $(M, g_M)$ be a closed, Riemannian manifold of dimension $n$ with sectional curvature $\kappa(M) \leq 0$. There exist a closed, Riemannian $(n+1)$-dimensional manifold $\M$ of geometric rank $1$ with sectional curvature $\kappa(\M) \leq 0$ and a isometric embedding $f\colon M \longrightarrow \M$. 
\end{theorem}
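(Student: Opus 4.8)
The plan is to produce $\M$ in two moves: first, take a relative pinched smooth hyperbolization of $M\times[-1,1]$ which leaves the two boundary copies of $M$, together with the metric $g_M$, untouched; then glue these two boundary copies to one another. The common image of the two copies will be the prescribed totally geodesic hypersurface isometric to $(M,g_M)$, and the negative curvature introduced by the hyperbolization will force geometric rank one. Concretely, fix a smooth triangulation of $M$, put $V:=M\times[-1,1]$ with $\partial V=M\times\{-1\}\sqcup M\times\{1\}$, and apply the relative version of the technology of \cite{Ontanedahyperbolization} to $(V,\partial V)$. This should yield a compact smooth $(n+1)$-manifold $C$ with $\partial C=\partial V$ and a Riemannian metric $g_C$ satisfying: $\kappa(C)\le 0$ everywhere and $\kappa(C)<0$ on $C\setminus\partial C$ (with the metric pinched negative on the genuinely hyperbolized part), and each component of $\partial C$ is totally geodesic, carries the metric $g_M$, and admits a collar $M\times[0,\varepsilon)$ on which $g_C=dr^2+\cosh^2(r)\,g_M$.

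I expect the construction of such a $C$ to be the only substantial step, and the reason it is possible is that $g_M$ is nonpositively curved. Indeed, in the warped product $dr^2+\cosh^2(r)\,g_M$, a $2$-plane containing $\partial_r$ has sectional curvature $-1$, while a $2$-plane tangent to the $M$-factor over a point $p$ has sectional curvature $\kappa_M(p)\cosh^{-2}(r)-\tanh^2(r)$; since $\kappa_M(p)\le 0$, this is $\le 0$ for all $r$ and is $<0$ for $r>0$. Hence this collar is nonpositively curved, is strictly negatively curved off its $\{r=0\}$ face, has totally geodesic boundary $M\times\{0\}$ (since $\cosh'(0)=0$) isometric to $(M,g_M)$, and can be attached smoothly to the pinched-negatively-curved hyperbolized interior. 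That the hyperbolization may be carried out in the smooth Riemannian category (so that $\M$ will be an honest Riemannian manifold and totally geodesic submanifolds and Jacobi fields are available) and relative to the boundary (so that $(M,g_M)$ survives verbatim) is exactly what \cite{Ontanedahyperbolization} provides.

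Granting $C$, form $\M$ by identifying $M\times\{-1\}$ with $M\times\{1\}$ via the identity of $M$. Since both collars equal $dr^2+\cosh^2(r)\,g_M$ and $\cosh$ is even, near the identification locus $\Sigma$ the metric of $\M$ is $dt^2+\cosh^2(t)\,g_M$ for $t\in(-\varepsilon,\varepsilon)$; thus $\M$ is a closed smooth Riemannian $(n+1)$-manifold, the identification creates no curvature along $\Sigma$, and $\kappa(\M)\le 0$. The hypersurface $\Sigma$ is the fixed-point set of the isometric involution interchanging the two sides of the gluing, hence totally geodesic, with induced metric $\cosh^2(0)\,g_M=g_M$; so $f\colon M\xrightarrow{\ \sim\ }\Sigma\hookrightarrow\M$ is an isometric totally geodesic embedding.

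It remains to check that $\M$ has geometric rank one, i.e. that some geodesic admits no parallel Jacobi field other than multiples of its velocity. For $p\in M$ the coordinate line $t\mapsto(p,t)$ of the warped collar is a unit-speed geodesic, and, extended maximally, it is a geodesic $\gamma$ of $\M$ meeting points $(p,t)$ with $t\in(0,\varepsilon)$, at which $\M$ is strictly negatively curved. If $J$ were a parallel Jacobi field along $\gamma$ with $J$ not proportional to $\dot\gamma$, then $R(J,\dot\gamma)\dot\gamma=-J''=0$ along $\gamma$, so the $2$-plane spanned by $J$ and $\dot\gamma$ would be flat at every point of $\gamma$ --- impossible at a point of strictly negative curvature. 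Hence $\gamma$ has rank one, so $\M$ has geometric rank one (and in particular its universal cover is not a Riemannian product). When $M$ happens to bound a compact $(n+1)$-manifold $W$ one can replace the self-gluing by taking $C=h(W)$, with connected totally geodesic boundary $(M,g_M)$, and letting $\M$ be the double of $C$; the self-gluing is used only to dispense with any bounding hypothesis. All remaining verifications --- the warped-product curvature identity, the smoothness of the gluing, the totally geodesic nature of $\Sigma$, and the rank-one argument --- are routine.
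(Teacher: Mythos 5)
Your overall strategy --- hyperbolize $M\times I$ relative to its boundary, keep the two boundary copies of $(M,g_M)$, and glue them to one another along the identity --- is the same as the paper's, and your collar curvature computation, the smoothness of the self-gluing via evenness of $\cosh$, and the rank-one argument are all correct (indeed the rank-one verification is a point the paper leaves implicit). The gap is precisely the step you flag as ``the only substantial step'': the existence of $C$ with totally geodesic boundary carrying the prescribed metric $g_M$ and a collar $dr^2+\cosh^2(r)\,g_M$. This is not what the cited technology provides. Strict hyperbolization does not leave the boundary alone ($\partial h(V)=h(\partial V)$, not $\partial V$), so to retain $M\sqcup M$ one must pass through the relative construction: cone off the boundary, hyperbolize, and delete the cone point(s); since hyperbolization preserves links, the deleted neighborhoods are products $M\times(0,\infty)$. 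Metrically, what \cite[Section 11]{Ontanedahyperbolization} then gives on these ends is not a totally geodesic boundary but a cusp metric $dt^2+e^{-2t}g_M$ (negatively curved exactly because $\kappa(g_M)\le 0$: the tangential curvatures are $e^{2t}\kappa_M-1$). The cross-sections of such a cusp are umbilic, not totally geodesic, so the $C$ you posit does not come off the shelf.

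The paper fills this gap as follows: truncate the cusps at $t=0$ and glue, obtaining the non-smooth warped product $dt^2+e^{-2|t|}g_M$, then replace the warping function $e^{-|t|}$ near $t=0$ by a smooth, convex, even function $\phi$; by the Bishop--O'Neill formula, convexity of $\phi$ together with $\kappa(g_M)\le 0$ preserves nonpositive curvature, and evenness makes $\{t=0\}$ totally geodesic. (Equivalently, you could manufacture your $C$ by bending each cusp this way before gluing; note, though, that the induced metric on the resulting hypersurface is $\phi(0)^2 g_M$ rather than $g_M$, a homothety that must be absorbed by rescaling $g_M$ at the outset --- a point the paper itself elides.) One further hypothesis you should not omit: for $n>4$, identifying the ends smoothly with $M\times(0,\infty)$ requires the Whitehead group of $\pi_1(M)$ to vanish, which holds by Farrell--Jones because $M$ admits a nonpositively curved metric.
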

\begin{proof}
Let $\bigtriangleup$ be a smooth triangulation of $M$. We extend $\bigtriangleup$ to a triangulation of $M\times [0,1]$. We cone off the boundary of $M\times [0,1]$ (which has two components) and denote the resulting simplicial complex by $X$. Then $X$ is a manifold with one singular cone point $*$, that is, $X\setminus\{*\}$ is a manifold. Let $h(X)$ be a strict hyperbolization of $X$ (\cite{stricthyperbolization}). Then $h(X)$ is a manifold with one singularity $h(*)$. We pick $h(X)$ such that the faces of each Charney-Davis hyperbolization piece have large enough width as in \cite[Lemma 9.1.1]{Ontanedahyperbolization} so that pinched smooth hyperbolization can be applied to $h(X)\setminus\{h(*)\}$.

Let $W = h(X)\setminus\{h(*)\}$. Then $W$ is a noncompact manifold with two ends, each of which is homeomorphic to $M\times(0,\infty)$. Using the same proof given in \cite[Section 11]{Ontanedahyperbolization} there is a Riemannian metric $g$ on $W$ with sectional curvature $<0$ with the property that each end (with metric $g$) is isometric to $M\times (a,\infty)$ with metric
\[dt^2 + e^{-2t}g_M.\]
The actual value of $a$ is not crucial in this argument, so we assume $a<-1$. (To be able to apply the method in \cite[Section 11]{Ontanedahyperbolization} it is required that the Whitehead group of M be trivial if $M$ has dimension $>4$ (\cite[Theorem 7.9.1]{Ontanedahyperbolization}). But, since M has a non-positively curved metric $g_M$; this is true by a result of Farrell and Jones \cite{FJ4}.)

We truncate each of end of $W$ at $t = 0$ and glue the two boundary components of the resulting manifold together. We then get a closed manifold $\M$ with a Riemannian metric $\overline{g}$ that is not smooth at the gluing. The metric $\overline{g}$ is a warped product $dt^2 + e^{-2|t|}g_M$, for $-1<t<1$. Therefore, in order to smooth out the metric $\overline{g}$, we just need to smooth out the warping function $e^{-2|t|}$ around $t=0$ without altering the nonpositivity of the curvature.

Observe that since the metric $g_M$ is nonpositively curved, the warped product metric $dt^2 + \phi^2(t)g_M$ on $\R\times M$ has nonpositive curvature if $\phi(t)$ is a convex function by the Bishop-O'Neill curvature formula (\cite{BishopONeill}). Thus we can pick $\phi$ to be a convex, smooth, even function that agrees with $e^{-2|t|}$ outside a small neighbourhood of $t=0$ and assumes a minimum at $t=0$. We then obtain a Riemannian metric $\widehat{g}$ on $\M$ that has sectional curvature $\kappa \leq 0$. 

It is not hard to see that map $f\colon M \longrightarrow \M$ defined by identifying $M$ with cross section $t=0$ is a isometric embedding due to the evenness of $\phi(t)$.
\end{proof}
\begin{remark}
The theorem holds if we replace ``$\leq$" by ``$<$".
\end{remark}
\begin{acknowledgement}
The author would like to thank the anonymous referee for useful comments and for pointing out that the Whitehead group of $M$ must be trivial for the argument to work.
\end{acknowledgement}

\bibliographystyle{amsplain}
\bibliography{bibliography}

\end{document}